\newtheorem{thm}{Theorem}
\newtheorem{cor}[thm]{Corollary}
\newtheorem{lem}[thm]{Lemma}
\newtheorem{prop}[thm]{Proposition}
\newtheorem{prob}[thm]{Problem}
\theoremstyle{definition}
\newtheorem{defn}[thm]{Definition}
\theoremstyle{remark}
\renewcommand{\phi }{\varphi}
\renewcommand{\ll }{\left\langle\hspace{-.7mm}\left\langle }
\newcommand{\rr }{\right\rangle\hspace{-.7mm}\right\rangle }
\renewcommand{\d }{{\rm def} }
\newcommand{\ZZ }{{\mathbb Z}}
\newcommand{\rk}{\operatorname{rk}}
\begin{document}

\title{A simple construction of finitely generated infinite torsion groups}

\author{D. Osin\thanks{This work has been supported by the NSF grant DMS-1853989}}
\date{}
\maketitle
\begin{abstract}
The goal of this note is to provide yet another proof of the following theorem of Golod: there exists an infinite finitely generated group $G$ such that every element of $G$ has finite order. Our proof is based on the Nielsen-Schreier index formula and is simple enough to be included  in a standard group theory course.
\end{abstract}

\section{Introduction}

A group $G$ is said to be \emph{torsion} (or \emph{periodic}) if every element of $G$ has finite order. Obviously, every finite group has this property. Infinite torsion groups can be constructed as direct products of finite groups; however, such groups are not finitely generated. The following famous problem was posed by William Burnside in 1902 and served as a catalyst for research in group theory throughout the 20th century.

\begin{prob}
Is every finitely generated torsion group finite?
\end{prob}

It is easy to show that the answer is affirmative for abelian groups. By induction, this generalizes to all solvable groups. In 1911, Schur proved that every finitely generated torsion subgroup of $GL(n,\mathbb C)$ is finite \cite{S}. The same result holds for linear groups over arbitrary fields. Indeed, the Burnside problem for such groups can be reduced to the case of solvable groups by utilizing the well-known Tits alternative: \emph{every finitely generated linear group contains either a solvable subgroup of finite index or a non-cyclic free subgroup} \cite{Tit}.

Despite these positive results, the answer to Burnside's question turns out to be negative in general as demonstrated by Golod \cite{Gol} in 1964.

\begin{thm}[Golod] \label{main}
There exists a finitely generated infinite torsion group.
\end{thm}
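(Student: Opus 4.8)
The plan is to construct a $2$-generated example directly, as a quotient $G=F/N$ of the free group $F=F(a,b)$ of rank $2$. Enumerate the non-trivial elements of $F$ as $g_1,g_2,\dots$. I will choose exponents $n_1,n_2,\dots\ge 2$ recursively, put $N_i=\ll g_1^{n_1},\dots,g_i^{n_i}\rr$ (normal closure in $F$), and set $N=\bigcup_i N_i$, $G=F/N$. Torsion is then immediate: every element of $F$ is trivial or equals some $g_i$, and $g_i^{n_i}\in N$, so every element of $G$ has finite order. The only thing to arrange is that $G$ is infinite, and the first reduction is that it suffices to keep every finite stage $G_i:=F/N_i$ infinite. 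Indeed, if $G$ were finite then $[F:N_i]\ge[F:N]=|G|$ and the sequence $[F:N_i]$ is non-increasing, hence eventually constant; but an increasing chain of subgroups of $F$ of one fixed finite index must stabilise, so $N=N_{i_0}$ and $G=G_{i_0}$ for some $i_0$, contradicting that $G_{i_0}$ is infinite.

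So, inductively, suppose $G_{i-1}$ is infinite; I must choose $n_i$ so that $G_i=G_{i-1}/\ll\bar g_i^{\,n_i}\rr$ is again infinite. If $\bar g_i$ already has finite order in $G_{i-1}$, take $n_i$ to be that order and nothing changes; so assume $\bar g_i$ has infinite order. This is where the Nielsen--Schreier index formula enters. Fix at the outset a chain $F=L_0\rhd L_1\rhd L_2\rhd\cdots$ of finite-index normal subgroups with $\bigcap_j L_j=1$; by the index formula each $L_j$ is free, of rank $\rk L_j=1+[F:L_j](\rk F-1)=1+[F:L_j]$, and $[F:L_j]\to\infty$. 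The use of huge exponents is this: if $n_i$ is a multiple of $[F:L_i]$ then $\bar g_i^{\,n_i}=1$ in $F/L_i$, i.e.\ $g_i^{n_i}\in L_i$, and since $L_i$ is normal in $F$ the entire normal closure of $g_i^{n_i}$ lies in $L_i$, hence in $L_j$ for every $j\le i$: such a relator is invisible at every level below its own. Consequently the image of $N$ in $F/L_j$ is the normal closure of just the first $j-1$ relators, so $G$ surjects onto the finite group $F/NL_j$; and when those finitely many relators are rewritten by Reidemeister--Schreier as elements of $L_j$, the number of relator-conjugates produced is bounded in terms of $[F:L_j]$, while $L_j$ itself is free on $1+[F:L_j]$ generators. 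The goal is to use this surplus of Schreier generators over relators to force $[F:NL_j]\to\infty$, so that $G$ has finite quotients of unbounded order and is therefore infinite.

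The main obstacle is exactly this last, quantitative, step: a Golod--Shafarevich-type estimate guaranteeing that adjoining the relator-conjugates of the first $j-1$ powers inside the rank-$(1+[F:L_j])$ free group $L_j$ cannot collapse the index $[F:NL_j]$ back to something bounded --- in other words, that by spacing the exponents $n_i$ (and choosing the chain $(L_j)$) generously enough, each relator consumes, at every level, only a vanishing fraction of the $\approx[F:L_j]$ available Schreier generators. This is the group-theoretic counterpart of Golod's Hilbert-series computation, carried out with the Nielsen--Schreier index formula in place of graded algebras; everything else --- torsion, the reduction to finite stages, and the bookkeeping of which relators survive at which level --- should be routine.
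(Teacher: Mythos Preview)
Your outline identifies the right ingredients, and you are honest about where the gap lies; but the gap is more serious than you suggest, and the paper closes it with a specific algebraic trick rather than any Golod--Shafarevich-type estimate.

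Here is the problem with your counting. Suppose for concreteness that the relators $g_1^{n_1},\dots,g_{j-1}^{n_{j-1}}$ all lie in $L_j$ (this is the best case for you). Then $L_j$ is free of rank $1+[F:L_j]$, and by the standard transversal argument (Lemma~\ref{sm} of the paper) the normal closure in $F$ of each $g_i^{n_i}$ equals the $L_j$-normal closure of its $[F:L_j]$ conjugates by a transversal of $L_j$. The exponent $n_i$ plays no role in this count: one relator in $F$ always costs $[F:L_j]$ relators in $L_j$, however large $n_i$ is. With $j-1$ relators surviving at level $j$, that is $(j-1)[F:L_j]$ relations against $1+[F:L_j]$ generators, and for $j\ge 3$ the surplus is gone. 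So the ``vanishing fraction'' heuristic is simply false as stated, and no spacing of the $n_i$ or refinement of the chain $(L_j)$ rescues it.

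The paper's key observation is that a relator of the form $f=g^{m}$ \emph{commutes with $g$}. Hence for any finite-index normal $N\ni f$ and any $s=g^k n t$ (with $n\in N$ and $t$ in a right transversal $T$ of $\langle g\rangle N$) one has $f^s=(f^t)^{n^t}$, so that $\ll f\rr^G=\ll\{f^t:t\in T\}\rr^N$. This costs only $|T|=[G:\langle g\rangle N]=[G:N]\big/|\langle g\rangle N/N|$ relators inside $N$, not $[G:N]$; by first passing to an $N$ in which $g$ has large order one makes $|T|$ strictly smaller than the Nielsen--Schreier rank gain. The paper packages this as the inductive invariant ``$G_k$ has a finite-index normal subgroup of deficiency at least $2$'' (the class $\mathcal D$), and Proposition~\ref{quot} shows that one can kill a suitable power of the next element while remaining in $\mathcal D$. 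Crucially, the finite-index subgroup is re-chosen at each step to suit the element being killed, rather than fixed in advance as a single chain $(L_j)$; with a fixed chain one would have to control the orders of all the earlier $g_i$ modulo $L_j$ simultaneously, and that is precisely the bookkeeping your proposal leaves undone.
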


The proof of Theorem \ref{main} given in \cite{Gol} relied on a sufficient condition for certain graded algebras to be infinite dimensional, known as the Golod-Shafarevich inequality \cite{GS} (for a simplified version of Golod's argument, see \cite{Ols95}). Numerous other constructions of infinite finitely generated torsion groups have been discovered since then. Notable examples include groups generated by finite automata \cite{Ale}, groups of interval exchange transformations \cite{Gri}, inductive limits of hyperbolic groups \cite{Gro,Ols93}, and certain groups of finite exponent. The latter class of examples deserves a more detailed discussion.

Recall that a group $G$ has \emph{exponent} $n\in \mathbb N$ if every element $g\in G$ satisfies the identity $g^n=1$ and $n$ is the smallest natural number with this property. It is well-known and easy to prove that every group of exponent $2$ is abelian. Therefore, finitely generated groups of exponent $2$ are finite. Burnside \cite{Bur} proved that finitely generated groups of exponent $3$ are finite. Sanov \cite{San} and Hall \cite{Hal} obtained the same result for exponents $4$ and $6$, respectively. However, it remains an open problem whether a finitely generated group of exponent $5$ can be infinite.

As we move on to larger exponents, we encounter infinite finitely generated groups. More precisely, let $B(m,n)$ denote the free group of rank $m$ in the variety of all groups of exponent~$n$. In a monumental series of papers \cite{NA1,NA2,NA3}, Novikov and Adian showed that $B(m,n)$ is infinite for all $m\ge 2$ and all odd $n\ge 4381$. An improved version of the original proof for odd $n\ge 665$ can be found in the book \cite{A}. In \cite{Ols83}, Olshanskii suggested a much simpler geometric proof of the Novikov-Adian theorem for odd $n>10^{10}$. The question of whether $B(m,n)$ is infinite for all $m\ge 2$ and all sufficiently large even $n$ remained open until the mid-1990s, when Ivanov \cite{Iva} and Lysenok \cite{Lys} independently gave the affirmative answer.

Despite the progress made, there is still no elementary construction of a finitely generated, infinite group of finite exponent. The easiest proof, given by Olshanskii in \cite{Ols83}, is approximately $30$ pages long and rather technical. On the other hand, the constructions of Aleshin \cite{Ale} and Grigorchuk \cite{Gri}, as well as Olshanskii's version of Golod's proof \cite{Ols95}, are simple enough to be discussed in a standard group theory course (in these examples, the groups are not of finite exponent).

The goal of this paper is to provide yet another elementary proof of Golod's theorem that only relies on basic properties of finitely generated abelian groups and the  Nielsen-Schreier formula. In one form or another, this proof appeared in \cite{LO,OO,Osi,SP}. However, the exposition in these papers was ``spoiled" by technicalities necessary to ensure certain additional properties. Here, we present the argument in its simplest form.

\paragraph{Acknowledgments.} The authors is grateful to Alexander Olshanskii and Pierre de la Harpe for useful remarks and suggestions.

\section{Deficiency of finitely presented groups}

To make our paper as self-contained as possible, we review the standard results about abelian groups involved in the proof. Our primary reference is \cite{KM}, although these results can be found in most group theory textbooks.

Recall that every finitely generated abelian group $A$ can be decomposed as a direct sum of cyclic groups \cite[Theorem 8.1.2]{KM}.  We denote the number of infinite summands in this decomposition by $\rk(A)$. If $A$ is free abelian, $\rk(A)$ is the usual rank of $A$. Clearly, $\rk(A)$ serves as the lower bound on the number of generators of $A$.

The standard proof of the decomposition theorem for finitely generated abelian groups goes through establishing the following fact (see \cite[Theorem 8.1.1]{KM}).

 \emph{For any finitely generated free abelian group $B$ and any subgroup $C\le B$, there exist bases $\{b_1, \ldots, b_n\}$ and $\{c_1, \ldots, c_m\}$ ($m\le n$) of $B$ and $C$, respectively, and positive integers $d_1, \ldots, d_m$ such that $c_i=b_i^{d_i}$ for all $1\le i\le m$}.

In this notation, the quotient $A=B/C$ decomposes as $A=\ZZ^{n-m} \oplus \ZZ/d_1\ZZ \oplus \ldots \oplus \ZZ /d_m\ZZ$.
In particular, we have $$\rk(A)=n-m=\rk(B)-\rk(C).$$

A group presentation $G=\langle X\mid \mathcal R\rangle$ is said to be \emph{finite} if the set of generators $X$ and the set of relations $\mathcal R$ are finite. Given two elements $x,y$ of a group $G$, we  write $x^y$ for $y^{-1}xy$. Given a subset $S\subseteq G$, we denote by $\ll S\rr ^G$ the \emph{normal closure} of $S$ in $G$. That is, $\ll S\rr^G$ is the subgroup of $G$ generated by the set $\{ s^g\mid s\in S, \; g\in G\}$.

\begin{lem}\label{Zd}
Let $G$ be a group given by a finite presentation $\langle X\mid \mathcal R\rangle$ and let $d=|X|-|\mathcal R|$. Suppose that $d>0$. Then there exists a surjective homomorphism $G\to \ZZ^{d}$.
\end{lem}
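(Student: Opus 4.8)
The plan is to pass to the abelianization and apply the rank formula stated above. Let $G = \langle X \mid \mathcal{R}\rangle$ with $|X| = n$ and $|\mathcal{R}| = r$, so $d = n - r > 0$. Let $F$ be the free group on $X$ and $N = \ll \mathcal{R} \rr^F$, so $G = F/N$. The abelianization $G^{\mathrm{ab}} = G/[G,G]$ is a finitely generated abelian group, and I claim $\rk(G^{\mathrm{ab}}) \ge d$; granting this, the decomposition theorem gives a direct summand $\ZZ^{\rk(G^{\mathrm{ab}})}$, and projecting onto the first $d$ coordinates yields a surjection $G \to G^{\mathrm{ab}} \to \ZZ^d$.

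To compute $\rk(G^{\mathrm{ab}})$, I would work inside the free abelian group $B = F^{\mathrm{ab}} = F/[F,F] \cong \ZZ^n$. Under the quotient map $F \to B$, the normal subgroup $N$ maps onto a subgroup $C \le B$, namely the subgroup generated by the images $\bar{r}_1, \ldots, \bar{r}_k$ of the relators (here $k \le r$, since relators may repeat or some may lie in $[F,F]$). Since $N$ is generated as a normal subgroup by the relators, and abelianization kills conjugation, $C$ is in fact generated by just these $k \le r$ elements. Therefore $\rk(C) \le k \le r$. Now $G^{\mathrm{ab}} = F/(N[F,F]) = B/C$, and by the rank formula quoted before the lemma,
$$\rk(G^{\mathrm{ab}}) = \rk(B) - \rk(C) \ge n - r = d > 0.$$

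The main technical point — and the step most worth stating carefully — is the identification $G^{\mathrm{ab}} \cong B/C$ together with the observation that $C$ is generated by the $r$ images of the relators rather than by their full conjugacy classes. This is exactly where the hypothesis that the presentation is \emph{finite} (finitely many relators) gets used, and it is where one invokes that in an abelian group the normal closure of a set equals the ordinary subgroup it generates. Everything else is bookkeeping: the rank formula does the real work, and the final projection $\ZZ^{\rk} \to \ZZ^d$ exists precisely because $d \le \rk(G^{\mathrm{ab}})$. I would write the proof in three short moves: (1) reduce to showing $\rk(G^{\mathrm{ab}}) \ge d$; (2) realize $G^{\mathrm{ab}}$ as $\ZZ^n / C$ with $C$ generated by $r$ elements; (3) apply the rank formula and split off $\ZZ^d$.
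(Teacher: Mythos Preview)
Your proof is correct and follows essentially the same route as the paper's own argument: both pass to the abelianization $A=F/\big(\ll\mathcal R\rr^F[F,F]\big)\cong B/C$ with $B=F/[F,F]$ free abelian of rank $|X|$, observe that $C$ is generated by the images of the relators (since conjugation is trivial in $B$) so that $\rk(C)\le |\mathcal R|$, and then apply the rank formula $\rk(A)=\rk(B)-\rk(C)\ge d$ to produce the surjection onto $\ZZ^d$. The only difference is expository packaging.
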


\begin{proof}
Let $F$ denote the free group with the basis $X$. The group $G=F/\ll \mathcal R\rr^F$ surjects onto the abelian group $$A=F\big/\ll \mathcal R\rr^F[F,F]\cong (F/[F,F])\Big/ \Big(\ll \mathcal R\rr^F[F,F]/[F,F]\Big).$$ The group $B=F/[F,F]$ is free abelian of rank $|X|$ (see \cite[p.99]{KM}). It is easy to see that the group $C=\ll \mathcal R\rr^F[F,F]/[F,F]$ is generated by the natural image of $\mathcal R$ in $C$. This implies that $\rk(C)\le |\mathcal R|$. Therefore, $\rk(A)= \rk(B)-\rk(C)\ge d$ and the existence of the required surjection follows from the definition of $\rk(A)$.
\end{proof}

\begin{cor}\label{max}
For any finitely presented group $G$, the difference $|X|-|\mathcal R|$ is uniformly bounded from above over all finite presentations $\langle X\mid \mathcal R\rangle$ of $G$.
\end{cor}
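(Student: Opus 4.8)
The plan is to derive Corollary~\ref{max} directly from Lemma~\ref{Zd} by exhibiting an explicit upper bound on $|X| - |\mathcal R|$ that does not depend on the chosen presentation. The key observation is that the number $\rk(G/[G,G])$ is an invariant of the group $G$, since it depends only on the isomorphism type of the abelianization, and that Lemma~\ref{Zd} (or rather its proof) shows that $|X| - |\mathcal R|$ is bounded above by this invariant.

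In more detail: first I would fix an arbitrary finite presentation $\langle X \mid \mathcal R\rangle$ of $G$ and set $d = |X| - |\mathcal R|$. If $d \le 0$ there is nothing to prove, so assume $d > 0$. By Lemma~\ref{Zd} there is a surjective homomorphism $G \to \ZZ^d$. Since $\ZZ^d$ is abelian, this surjection factors through the abelianization $G/[G,G]$, yielding a surjection $G/[G,G] \to \ZZ^d$. By the rank computation recalled before Lemma~\ref{Zd} (ranks can only drop under surjections of finitely generated abelian groups, or equivalently $\rk$ is monotone), we get $\rk(G/[G,G]) \ge \rk(\ZZ^d) = d$. Hence $|X| - |\mathcal R| = d \le \rk(G/[G,G])$ for every finite presentation, which is the desired uniform bound.

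One point I would be careful about: the statement implicitly presumes $G$ admits at least one finite presentation (it is ``finitely presented''), so the supremum is over a nonempty set; and $\rk(G/[G,G])$ is finite because any finite presentation already shows $G/[G,G]$ is finitely generated. I should also note that the bound $\rk(G/[G,G])$ is in fact attained — one can always find a presentation where $|X| - |\mathcal R|$ equals this number — though for the corollary as stated only the upper bound is needed.

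I do not anticipate a serious obstacle here: the entire content is packaged in Lemma~\ref{Zd}, and the corollary is essentially the remark that the lemma's conclusion forces $d$ to be at most an intrinsic invariant of $G$. The only mildly delicate step is justifying that a surjection onto $\ZZ^d$ forces $\rk \ge d$, but this is immediate from the rank formula $\rk(A) = \rk(B) - \rk(C)$ quoted in the excerpt applied to a free abelian cover of $G/[G,G]$, since killing a subgroup can only decrease the rank.
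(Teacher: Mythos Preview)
Your proof is correct and follows essentially the same route as the paper: apply Lemma~\ref{Zd} and observe that a surjection $G\to\ZZ^d$ forces $d$ to be bounded by an intrinsic invariant of $G$. The only difference is cosmetic: the paper bounds $d$ by the minimal number of generators of $G$ (since $\ZZ^d$ cannot be generated by fewer than $d$ elements), whereas you bound it by $\rk(G/[G,G])$, which is slightly sharper but requires the extra step of factoring through the abelianization.
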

\begin{proof}
If $G$ can be generated by $n$ elements, then $|X|-|\mathcal R|\le n$ for any finite presentation $\langle X\mid \mathcal R\rangle$ of $G$ by Lemma \ref{Zd}.
\end{proof}

Corollary \ref{max} allows us to formulate the following.

\begin{defn}
For a finitely presented group $G$, the maximum of the difference between the number of generators and the number of relations over all finite presentations of $G$ is called the \emph{deficiency} of $G$ and denoted by $\d (G)$.
\end{defn}

Recall that a subset $T$ of a group $G$ is a \emph{left} (respectively, \emph{right}) \emph{transversal} of a subgroup $H\le G$ if $G=\bigsqcup_{t\in T} tH$ (respectively, $G=\bigsqcup_{t\in T} Ht$); clearly, we have $|T|=|G:H|$ in both cases.

\begin{lem}\label{sm}
Let $G$ be a finitely presented group. Every finite index subgroup $H\le G$ is finitely presented and we have
$\d (H)-1\ge (\d (G)-1)|G:H|.$
\end{lem}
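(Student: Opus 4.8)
The plan is to combine the Reidemeister--Schreier rewriting of a group presentation along a finite-index subgroup with the Nielsen--Schreier index formula. We begin by fixing a finite presentation $\langle X\mid \mathcal R\rangle$ of $G$ on which the deficiency is attained (the maximum in the definition of $\d (G)$ exists by Corollary \ref{max}), so that $|X|-|\mathcal R|=\d (G)$. Let $F$ be the free group on $X$, let $\pi\colon F\to G$ be the canonical epimorphism, and put $N=\Ker\pi=\ll \mathcal R\rr^F$. Write $j=|G:H|$ and set $\widetilde H=\pi^{-1}(H)$; then $|F:\widetilde H|=j$, $N\le \widetilde H$, and $\widetilde H/N\cong H$.

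The first step is to describe $\widetilde H$ via the Nielsen--Schreier formula: being a subgroup of index $j$ in the free group $F$ of rank $|X|$, it is free of rank $1+j(|X|-1)$. The second — and main — step is to present $N$, as a normal subgroup of $\widetilde H$, using few generators. For this we choose a left transversal $T$ of $\widetilde H$ in $F$ with $|T|=j$, so $F=\bigsqcup_{t\in T}t\widetilde H$, and we note that for all $r\in\mathcal R$ and $t\in T$ the element $r^t$ lies in $N$, hence in $\widetilde H$, because $N\trianglelefteq F$ and $\mathcal R\subseteq N$. Writing an arbitrary $g\in F$ as $g=th$ with $t\in T$, $h\in\widetilde H$, one computes $r^g=(r^t)^h$; since $N$ is generated in $F$ by the conjugates $r^g$ ($r\in\mathcal R$, $g\in F$), this yields $N=\ll\{\,r^t:r\in\mathcal R,\ t\in T\,\}\rr^{\widetilde H}$. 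Therefore $H\cong\widetilde H/N$ is defined by a finite presentation whose generators are a free basis of $\widetilde H$ and whose relators are the (at most $j|\mathcal R|$) words $r^t$; in particular $H$ is finitely presented.

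It then remains to read off the inequality. The presentation just produced has $1+j(|X|-1)$ generators and at most $j|\mathcal R|$ relators, so
$$\d (H)\ \ge\ \bigl(1+j(|X|-1)\bigr)-j|\mathcal R|\ =\ 1+j\bigl(|X|-|\mathcal R|-1\bigr)\ =\ 1+j\bigl(\d (G)-1\bigr),$$
which rearranges to $\d (H)-1\ge(\d (G)-1)\,|G:H|$, as required. I expect the only genuinely delicate point to be the identity $N=\ll\{\,r^t\,\}\rr^{\widetilde H}$: one must be careful about the side on which the transversal $T$ acts, and — crucially — use that $N$ is normal in all of $F$, not merely in $\widetilde H$, in order to know that the conjugates $r^t$ already lie in $\widetilde H$ and hence really do define relators for the subgroup. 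Everything else is routine bookkeeping, granting the Nielsen--Schreier formula, which we take as known.
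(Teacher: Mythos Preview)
Your argument is correct and is essentially identical to the paper's own proof: both pull the presentation back to the free group, apply the Nielsen--Schreier index formula to the preimage of $H$, and use a left transversal to rewrite the normal closure $\ll\mathcal R\rr^F$ as a normal closure in that preimage with at most $j|\mathcal R|$ generators, then read off the deficiency inequality. Your remark about needing $N\trianglelefteq F$ (not just $N\trianglelefteq\widetilde H$) to ensure $r^t\in\widetilde H$ is exactly the point, and the paper handles the transversal computation the same way.
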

\begin{proof}
Let $G=F/R$, where $F$ is free of rank $r$, $R=\ll R_1, \ldots , R_s\rr ^F$, and $r-s=\d(G)$. Let $H$ be a finite index subgroup of $G$, $K$ the full preimage of $H$ in $F$. By the Nielsen-Schreier formula, $K$ is free of rank $(r-1)j+1$, where $j=|F:K|=|G:H|$.

Let $T$ be a left transversal of $K$ in $F$. For every element $f\in F$ we have $f=tk$ for some $t\in T$ and $k\in K$. Hence,   $R_i^f=(R_i^t)^k\in \ll R_i^t\rr ^K$ for all $i=1, \ldots, s$. This easily implies $$R=\left\langle \left\{ R_i^f \mid i=1, \ldots, s,\, f\in F\right\}\right\rangle=\ll \Big\{ R_i^t \mid i=1, \ldots, s,\, t\in T\Big\} \rr ^K.$$ Thus, the group $H=K/R$ has a presentation with $(r-1)j+1$ generators and $s|T|=sj$ relations. Therefore, we have
$$
\d (H)-1\ge (r-1)j - sj = (r-s-1)j=(\d(G)-1)|G:H|.
$$
\end{proof}

\section{Proof of Golod's theorem}

Let $\mathcal D$ denote the class of all finitely presented groups that contain a finite index normal subgroup of deficiency at least $2$. For a group $G$, we denote by $\widehat G$ the quotient of $G$ by the intersection of all finite index subgroups of $G$.

The idea of the proof of the following proposition is borrowed from \cite{BP}.

\begin{prop}\label{quot}
Let $G\in \mathcal D$. For every $g\in G$, there exists $m\in \mathbb Z$ such that, for every $\ell\in \mathbb N$, the quotient group $Q=G/\ll g^{\ell m}\rr ^G$ belongs to $\mathcal D$ and the image of $g$ in $\widehat Q$ has finite order.
\end{prop}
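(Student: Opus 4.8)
The plan is to reduce the statement to a count of generators versus relations controlled by Lemma~\ref{sm}. Two parts are immediate: $Q=G/\ll g^{\ell m}\rr^G$ is obtained from the finitely presented group $G$ by adjoining a single relator, so $Q$ is finitely presented; and for all $m\ge1$ and $\ell\ge1$ the image $\bar g$ of $g$ in $Q$ satisfies $\bar g^{\ell m}=1$, so $\bar g$ has finite order in $Q$, hence in its quotient $\widehat Q$. If $g$ has finite order $n$ in $G$ we may take $m=n$, so that $Q=G\in\mathcal D$; hence I assume from now on that $g$ has infinite order, and the real task is to choose $m\ge 1$ so that $Q$ contains a finite-index normal subgroup of deficiency $\ge 2$.

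Fix a finite-index normal subgroup $N_0\triangleleft G$ with $\d(N_0)\ge 2$, let $a_0$ be the order of $gN_0$ in $G/N_0$, and set $h:=g^{a_0}\in N_0$; then $h$ has infinite order. Assume first that no power of $g$ lies in the finite residual of $G$. Then $h^1,\dots,h^K$ are nontrivial, where $K:=|G:N_0|/a_0\ge 1$, so for each $i$ one can pick a finite-index normal subgroup $M_i\triangleleft G$ with $h^i\notin M_i$; put $N:=N_0\cap M_1\cap\dots\cap M_K$. Then $N$ is finite-index and normal in $G$, $N\subsetneq N_0$ (since $h\in N_0\setminus M_1$), and $h^i\notin N$ for $1\le i\le K$. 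Letting $b$ be the order of $hN$ in $N_0/N$ (so $b>K$) and $m:=a_0b$, we get that $m$ equals the order of $gN$ in $G/N$, that $g^m=h^b\in N$, and hence that $\ll g^{\ell m}\rr^G\le N$ for every $\ell$.

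The crux — and the step I expect to be the main obstacle — is that the image $\bar N$ of $N$ in $Q$ has a cheap presentation. Exactly the computation in the proof of Lemma~\ref{sm} yields $\bar N=N/\ll\{(g^{\ell m})^t:t\in T\}\rr^N$ for a transversal $T$ of $N$ in $G$, which a priori adjoins $|G:N|$ relators to a presentation of $N$. But $g$ centralizes $g^m$, so $C_G(g^m)N$ is a subgroup of $G$ containing both $\langle g\rangle N$ and $N$, and $(g^{\ell m})^{t}$ is conjugate in $N$ to $(g^{\ell m})^{t'}$ whenever $t,t'$ lie in one coset of $C_G(g^m)N$; hence that normal subgroup is normally generated in $N$ by at most
\[
[G:C_G(g^m)N]\ \le\ [G:\langle g\rangle N]\ =\ \frac{|G:N|}{a_0b}\ <\ \frac{|G:N|}{a_0K}\ =\ |N_0:N|,
\]
i.e.\ by at most $|N_0:N|-1$ elements. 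Adjoining these relators to a presentation of $N$ realizing $\d(N)$, and using $\d(N)\ge(\d(N_0)-1)|N_0:N|+1\ge|N_0:N|+1$ from Lemma~\ref{sm}, we obtain $\d(\bar N)\ge\d(N)-(|N_0:N|-1)\ge 2$. As $\bar N$ is a finite-index normal subgroup of $Q$, this proves $Q\in\mathcal D$.

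Finally, when some power of $g$ does lie in the finite residual of $G$ — equivalently, when the image of $g$ in $\widehat G$ has finite order $e$ — one instead takes $m=e$, checks that $\ll g^{\ell m}\rr^G$ is contained in the finite residual and that $\widehat Q=\widehat G$, and runs a variant of the argument above (working with a subgroup $N_0$ of index less than $e$, which requires additional care) to produce a finite-index subgroup of $Q$ of deficiency $\ge 2$. Besides this case, the delicate points are the bookkeeping that shows how many of the conjugates $(g^{\ell m})^t$ fuse under $N$-conjugacy, and the simultaneous quantitative control of $\d(N)$ against the order of $g$ modulo $N$.
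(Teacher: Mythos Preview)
Your main case—where the image of $g$ in $\widehat G$ has infinite order—is correct and is exactly the paper's argument. The key observation that conjugates $(g^{\ell m})^s$ with $s$ ranging over a fixed coset of $\langle g\rangle N$ are already $N$-conjugate, so that $\ll g^{\ell m}\rr^G$ is normally generated in $N$ by only $[G:\langle g\rangle N]$ elements rather than $[G:N]$, is precisely the paper's computation; your phrasing via $C_G(g^m)$ is a cosmetic variant. The resulting chain of inequalities leading to $\d(\bar N)\ge 2$ matches the paper's.

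The genuine gap is your final paragraph, the case where $g$ has infinite order in $G$ but finite order $e$ in $\widehat G$. Your plan there—take $m=e$ and rerun the argument ``working with a subgroup $N_0$ of index less than $e$''—does not work in general. You need $\d(N_0)\ge 2$, and there is no reason such an $N_0$ of small index exists; indeed if $e=1$ (i.e.\ $g$ itself lies in the finite residual, which is compatible with $g$ having infinite order in $G$) you would need $N_0=G$ with $\d(G)\ge 2$, which is not assumed. More generally, the order of $g$ modulo any finite-index normal $N$ is at most $e$, so the crucial inequality $[G:\langle g\rangle N]<|N_0:N|$ forces $e>|G:N_0|$, and this may fail for every $N_0$ with $\d(N_0)\ge 2$. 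You overlooked the trivial fix because you tacitly restricted to $m\ge 1$: the statement allows $m\in\mathbb Z$, and the paper simply takes $m=0$ whenever the image of $g$ in $\widehat G$ already has finite order, so that $Q=G\in\mathcal D$ and $\widehat Q=\widehat G$.
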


\begin{proof}
If the image of $g$ in $\widehat G$ already has finite order, we can take $m=0$. Henceforth, we assume that the order of the image of $g$ in $\widehat G$ is infinite. That is, for any $i\in \mathbb N$, there exists a finite index subgroups $K\lhd G$ such that $|\langle g\rangle K / K |> i$.

Let $M$ be a finite index normal subgroup of $G$ such that $\d (M)\ge 2$. By our assumption, we can find a finite index subgroup $K\lhd G$ such that $|\langle g\rangle K / K |> |G/M|$. Let $N=K\cap M\lhd G$. Clearly, we have
\begin{equation}\label{o(g)}
|\langle g\rangle N / N |\ge |\langle g\rangle K / K | > |G/M|.
\end{equation}

Set $m=|\langle g\rangle N / N |$ and $f=g^{\ell m}$, where $\ell $ is an arbitrary natural number. Obviously, $f\in N$. Let $T$ be a right transversal of $\langle g\rangle N$ in $G$. Every $s\in G$ can be written as  $s=g^knt$ for some $k\in \mathbb Z$,
$n\in N$, and $t\in T$. In this notation, we have $$f^s=f^{g^knt}=f^{nt}=(f^{t})^{n^t}\in \ll f^t\rr^N$$ since $n^{t}\in N$. This easily implies that $\ll f\rr ^G=\ll \{ f^{t} \, |\, t\in T\}\rr^N$. Using Lemma \ref{sm} and (\ref{o(g)}), we obtain
\begin{equation*}
\begin{split}
  \d\Big(N/\ll f\rr^G\Big) & \ge \d(N) - |T| \ge 1+ (\d(M) -1)|M/N| - |T|\\
  & \ge 1+ |M/N| \left(\d(M)-1 - \frac{|G:\langle g\rangle N|}{|M/N|}\right)  \\
  & \ge 1+ |M/N| \left(\d(M)-1 - \frac{|G/N|}{|\langle g\rangle N/N|\cdot |M/N|}\right)  \\
  & \ge 1+ |M/N| \left(\d(M)-1 - \frac{|G/M|}{|\langle g\rangle N/N|}\right) > 1.
\end{split}
\end{equation*}
Therefore, $\d\left(N/\ll f\rr^G\right)\ge 2$ and $G/\ll f\rr^G\in \mathcal D$.
\end{proof}

\begin{proof}[Proof of Theorem \ref{main}]
Let $F$ denote a finitely generated free group of rank at least $2$. We enumerate all elements of $F=\{ g_0=1, g_1, g_2, \ldots \} $ and construct a sequence of quotients $G_0, G_1, \ldots$ of $F$ and normal subgroups $M_i\lhd G_i$ fitting into the commutative diagram with surjective arrows
\begin{equation}\label{seq}
\begin{array}{ccccccc}
G_0 & \longrightarrow  & G_1 &  \longrightarrow & G_2 &  \longrightarrow &\ldots\\
\downarrow && \downarrow && \downarrow &&\\
G_0/M_0& \longleftarrow & G_1/M_1 & \longleftarrow & G_2/M_2 & \longleftarrow & \ldots,\\
\end{array}
\end{equation}
by the following inductive procedure.

Let $M_0=G_0=F$. Suppose that for some $k\ge 0$, we have already constructed a group $G_k$ and a subgroup $M_k\lhd G_k$ such that the following conditions hold (note that $G_0$ and $M_0$ obviously satisfy these conditions):
\begin{enumerate}
\item[(a)] $G_k\in \mathcal D$;
\item[(b)] the natural image of $g_k$ in $\widehat G_k$ has finite order;
\item[(c)] $\infty> |G_k/M_k|> k$.
\end{enumerate}

By (a) and Lemma \ref{Zd}, $G_k$ contains subgroups of arbitrarily large finite index. In particular, we can find a subgroup $L_k\lhd G_k$ such that $L_k\le M_k$ and $\infty > |G_k/L_k|> k+1$. Let $g$ denote the image of $g_{k+1}$ in $G_k$ and let $\ell =|G_k/L_k|$. By Proposition \ref{quot}, there exists $m\in \mathbb Z$ such that $G_{k+1}= G_{k}/\ll g^{\ell m}\rr ^{G_{k}}\in \mathcal D$ and the image of $g$ in $\widehat G_{k+1}$ has finite order. Let $M_{k+1}$ denote the image of $L_k$ in $G_{k+1}$. Note that $G_{k+1}/ M_{k+1} \cong G_{k}/L_k$ since $g^{\ell m}\in L_k$. Therefore, $|G_{k+1}/ M_{k+1}|=|G_k/L_k|>k+1$ and $G_{k+1}/ M_{k+1}$ naturally surjects onto $G_k/M_k$ since $L_k\le M_k$.

Thus we obtain the commutative diagram (\ref{seq}), where all horizontal arrows are surjective and all vertical arrows are natural homomorphisms. Let $G$ be the direct limit of the first row of (\ref{seq}). That is, $G=G_0/\bigcup_{k\in \mathbb N}N_k$, where $N_k$ is the kernel of the homomorphism $G_0\to G_k$ obtained by composing the first $k$ maps in the first row of (\ref{seq}). By (b), the image of $g_k$ has finite order in $\widehat G_k$ for each $k$. Since every $\widehat G_k$ naturally surjects onto $\widehat G$, the group $\widehat G$ is torsion. On the other hand, $\widehat G$ surjects onto $G_k/M_k$ for all $k$. Combining this with (c), we obtain that $\widehat G$ is infinite.
\end{proof}

\noindent \textbf{Denis Osin: } Department of Mathematics, Vanderbilt University, Nashville 37240, U.S.A.\\
E-mail: \emph{denis.v.osin@vanderbilt.edu}

\end{document}